\newcommand{\R}{{\cal R}}
\newcommand{\GGG}{{\mathbb C[G]}}
\newcommand{\be}{\begin{equation}}
\newcommand{\ee}{\end{equation}}
\newcommand{\bee}{\begin{eqnarray}}
\newcommand{\eee}{\end{eqnarray}}
\newcounter{theorem}%
\makeatletter \@addtoreset{theorem}{section}
\newcounter{lemma}
\makeatletter \@addtoreset{lemma}{section}
\newcounter{proposition}
\newcommand{\proposition}{\par\refstepcounter{theorem}
           {\bf Proposition
           \arabic{section}.%
           \arabic{theorem}. }}
\makeatletter \@addtoreset{proposition}{section}
\newcounter{corollary}
\makeatletter \@addtoreset{corollary}{section}
\newcounter{definition}
\makeatletter \@addtoreset{definition}{section}
\newenvironment{proof}[1][Proof]{\noindent\textsf{#1.\ }}
{\hfill {\small $\square$}}
\makeatletter \@addtoreset{equation}{section}
\begin{document}

\sloppy \title
 {
The number of independent Traces and Supertraces on the Symplectic
Reflection Algebra $H_{1,\nu}(\Gamma \wr S_N)$ }

\author
 {
 S.E. Konstein%
\thanks{ I.E. Tamm Department of Theoretical Physics,
          P.N. Lebedev Physical Institute, RAS
          119991, Leninsky prosp., 53, Moscow, Russia}
\thanks{E-mail: konstein@lpi.ru}
 ,
 I.V. Tyutin$^*$%
\thanks{Tomsk State Pedagogical University, Kievskaya St. 60, 634061 Tomsk, Russia}
\thanks{E-mail: tyutin@lpi.ru}     }

\date{
}

\maketitle
\thispagestyle{empty}

\begin{abstract}

Symplectic reflection algebra $ H_{1, \,\nu}(G)$ has a
$T(G)$-dimensional space of traces whereas, when considered as a
superalgebra with a natural parity, it has an $S(G)$-dimensional
space of supertraces.
The values of $T(G)$ and $S(G)$ depend on the symplectic reflection
group $G$
and do not depend on the parameter $\nu$.

In this paper, the values $T(G)$ and $S(G)$ are explicitly calculated for the groups
$G= \Gamma \wr S_N$,
where $\Gamma$ is a finite subgroup of $ Sp(2,\mathbb C)$.

\end{abstract}




\section{Introduction}
\label{page}

Let $V:=\mathbb C^{2N}$, let $G\subset Sp(2N,\mathbb C)$  be a
finite group generated by symplectic reflections. In \cite{KT2}, it
was shown that Symplectic Reflection Algebra $H_{1,\nu}(G)$ has
$T(G)$ independent traces, where $T(G)$ is the number of conjugacy
classes of elements without eigenvalue $1$ belonging to the group
$G\subset Sp(2N)\subset End(V)$, and that the algebra
$H_{1,\nu}(G)$, considered as a superalgebra with a natural parity,
has $S(G)$ independent supertraces, where $S(G)$ is the number of
conjugacy classes of elements without eigenvalue $-1$ belonging to
$G\subset Sp(2N)\subset End(V)$. Hereafter, speaking about spectrum,
eigenvalues and eigenvectors, the rank of an element of the group
algebra $\mathbb C[G]$ of the group $G$, etc., we have in mind the
representation of the group algebra $\mathbb C[G]$ in the space $V$.
Besides, we denote all the units in groups, algebras, etc., by 1,
and $c\cdot 1$ by $c$ for any number $c$.

There are two families of  groups generated by symplectic
reflections, see \cite{GS} and also \cite{HW}, \cite{BG},
\cite{Gor}:

Family 1):  $G$ is a complex reflection group acting on $\mathfrak H
\oplus \mathfrak H^*$, where $\mathfrak H$ is the space of
reflection representation. In this case, $G$ is a product of several
groups  from the following set of Coxeter groups
\be\label{list}
A_n, \,\,B_n,  \,\,C_n, \,\,D_n,  \,\,E_n,  \,\,F_n,  \,\,G_2,
\,\,H_n,  \,\,I_2(n).
\ee

Family 2):  $G=\Gamma \wr S_N$, which means here
 $G=\Gamma^N \rtimes S_N$
acting on $ (\mathbb
C^2)^N$, where $\Gamma$  is a finite subgroup of $Sp(2,\mathbb C)$.

For groups $G$ from the set (\ref{list}), the list of values $T(G)$ and $S(G)$ is given in
\cite{stek}.

\medskip

In this work, we give the values of $T(G)$ and $S(G)$ for the 2nd family.
Namely, we found the generating functions
\bee\label{th0}
t(\Gamma,x)&:= &\sum_{N=0}^{\infty}T(\Gamma \wr S_N)x^N,
\\
s(\Gamma,x)&:= &\sum_{N=0}^{\infty}S(\Gamma \wr S_N)x^N
\label{th00}
\eee
for each finite subgroup $\Gamma\subset
Sp(2,\mathbb C)$, see
Theorem \ref{th1}.

All needed definitions are given in the Section \ref{prel}; the
structure, conjugacy classes and
 characteristic polynomials of the groups $\Gamma \wr S_N$ are described
in Section \ref{secG}.

To include the case $N=0$ in consideration in formulas (\ref{th0}) -- (\ref{th00}),
it is natural to set $\Gamma \wr S_0:= \{E\}$, where $\{E\}$ is the group
containing only one element $E$, and, since $\dim V=0$, set
$H_{1,\nu}(\Gamma \wr S_0):= \mathbb C[\{E\}]$.

Applying the definitions given in Section \ref{prel} to the algebra $H_{1,\nu}(\Gamma \wr S_0)$ we deduce that

a) if the algebra $H_{1,\nu}(\Gamma \wr S_0)= \mathbb C[\{E\}]$ is considered as superalgebra,
it has only a trivial parity $\pi\equiv 0$;

b) the algebra $H_{1,\nu}(\Gamma \wr S_0)= \mathbb C[\{E\}]$ has 1-dimensional space of traces
and 1-dimensional space of supertraces; these spaces coincide;

c) it is natural to set $T(\Gamma \wr S_0)= S(\Gamma \wr S_0)=1$;

d) the algebra $H_{1,\nu}(\Gamma \wr S_0)= \mathbb C[\{E\}]$
contains two
Klein operators (i.e., elements satisfying conditions
(\ref{k1}) -- (\ref{k3})), namely, $E$ and $-E$.

\section{Preliminaries}\label{prel}

\subsection{Traces}

Let ${\cal A}$ be an associative superalgebra with parity $\pi$.
All expressions of linear algebra are given for homogenous elements only
and are supposed to be extended to inhomogeneous elements via linearity.

A linear complex-valued function $str$ on ${\cal A}$ is called a {\it supertrace} if
$$str(fg)=(-1)^{\pi(f)\pi(g)}str(gf) \ \mbox{ for all } f,g\in {\cal A}.$$

A linear complex-valued function $tr$ on ${\cal A}$ is called a {\it trace} if
$$tr(fg)=tr(gf) \ \mbox{ for all } f,g\in {\cal A}.$$

The element $K\in \cal A$ is called a {\it Klein operator}, if
\begin{eqnarray}\label{k1}
 &&\pi(K)=0,\\
 \label{k2}
 &&K^2=1,\\
 &&Kf=(-1)^{\pi(f)} f K  \qquad   \mbox{ for all } f \in {\cal A}.
 \label{k3}
\end{eqnarray}

Any Klein operator, if exists, establishes an isomorphism
between the space of traces  on ${\cal
A}$ and the space of supertraces on ${\cal
A}$.

Namely, if $f\mapsto tr(f)$ is a trace, then $f\mapsto
tr(fK^{1+\pi(f)})$ is a supertrace, and if $f\mapsto str(f)$ is a
supertrace, then $f\mapsto str(fK^{1+\pi(f)})$ is a trace.


\subsection{Symplectic reflection group}

Let $V={\mathbb  C}^{2N}$ be endowed with a non-degenerate
anti-symmetric
$Sp(2N)$-invariant bilinear form $\omega(\cdot,\cdot)$,
let the vectors $ e_i \in V$, where $i=1,\,...\,,\,2N$, constitute
a basis in $V$.

The matrix $(\omega_{ij}):=\omega(e_i,\,e_j)$ is anti-symmetric and non-degenerate.

Let $x^i$ be the coordinates of ${ x}\in V$, i.e.,
${x}= e_i\,x^i$. Then
$\omega({ x},\, y)=
\omega_{ij} x^i y^j$ for
any ${ x},\, y \in V$.
 The indices $i$ are lowered and raised
by means of the forms $(\omega_{ij})$ and $(\omega^{ij})$, where $\omega_{ij}\omega^{kj}=\delta_i^k$.

\begin{definition}
The element $R\in Sp(2N)\subset End V$ is called a {\it symplectic reflection}, if
$rank(R-1)=2$.
\end{definition}

\begin{definition}
Any finite subgroup $G$ of $Sp(2N)$ generated by a set of symplectic reflections
is called a {\it symplectic reflection group}.
\end{definition}

In what follows, $G$ stands for a symplectic reflection group, and
$\R$ stands for the set of all symplectic reflections in $G$.

Let $R\in \R$.
Set
\bee
V_R &:=& Im (R-1) \,, \\
Z_R &:=& Ker (R-1) \,.
\eee

Clearly, $V_R$ and $Z_R$ are symplectically perpendicular, i.e., $\omega(V_R,\,Z_R)=0$,
and  $V=V_R\oplus Z_R$.

So, let
$x=x_{{\phantom \,}_{V_R}}+x_{{\phantom \,}_{Z_R}}$ for any $x\in V$,
where $x_{{\phantom \,}_{V_R}}\in V_R$ and $x_{{\phantom \,}_{Z_R}}\in Z_R$.
Set
\be
\omega_R(x,y):=\omega (x_{{\phantom \,}_{ V_R}},\,y_{{\phantom \,}_{V_R}}).
\ee

\subsection{Symplectic reflection algebra (following \cite{sra})}

Let $ {\mathbb C}[G] $ be the {\it group algebra} of $ {G} $, i.e., the
set of all linear combinations $\sum_{g\in  {G} } \alpha_g \bar g$,
where $\alpha_g \in {\mathbb C}$.

 If we were rigorists, we would
write $\bar g$
to distinguish $g$ considered as an element of $ {G} \subset End(V)$
from the same element $\bar g \in  {\GGG}$
considered as an element of the group algebra.
The addition in $ {\GGG} $ is defined as follows:
$$
\sum_{g\in  {G} } \alpha_g \bar g + \sum_{g\in  {G} } \beta_g \bar g
= \sum_{g\in  {G} } (\alpha_g + \beta_g) \bar g
$$
and the multiplication is defined by setting
$\overline {g_1\!}\,\, \overline {g_2\!} = \overline {g_1 g_2}$.
In what follows, however, we abuse notation and omit the bar sign over elements of the group algebra.

Let $\eta$ be a function on ${\R}$, i.e., a set of constants $\eta_R$ with $R\in\R$ such that
$\eta_{R_1}=\eta_{R_2}$ if $R_1$ and $R_2$ belong to
one conjugacy class of $ {G} $.

\begin{definition}
\label{defpage}
The algebra $H_{t,\eta}(G)$, where $t\in \mathbb C$, is an associative algebra with unit  { 1};
it is the algebra
${\mathbb C}[V]$ of (noncommutative) polynomials in the elements of $V$ with coefficients
in the group algebra ${\mathbb C}[G]$ subject to the relations
\bee
g x&=&g(x) g %
\mbox{ for any } g\in  {G}
                   \mbox{ and } x \in V,
                   \mbox{ where } g(x)= e_i g^i_j x^j
                   \mbox{ for }x=e_i x^i,\\
\label{rel}
 \!\!\!\!\!\!\!\!\!\!\!\! [ x  , y] &=& t \omega(x,y)
 +
       \sum_{R\in\R} \eta_R
\omega_R(x,y)R
\mbox{ for any  $x,y\in V$}.
\eee

The algebra $H_{t,\eta}(G)$ is called a {\it symplectic reflection algebra},
see \cite{sra}.
\end{definition}

The commutation relations (\ref{rel}) suggest
to define the {\it parity} $\pi$ by setting:
\be\label{2.6}
\pi (x)=1,\ \pi (g)=0
\ \mbox{ for any }x\in V, \mbox{ and }g\in G,
\ee
enabling one to consider $H_{t,\eta}(G)$ as an associative {\it superalgebra}.

We consider the case $t\ne 0$ only, for any such $t$ it  is equivalent to the case $t=1$.

Let ${\cal A}$ and ${\cal B}$ be superalgebras such that ${\cal A}$ is a ${\cal B}$-module.
We say that the superalgebra
${\cal A}\ast {\cal B}$ is a {\it crossed product} of ${\cal A}$ and ${\cal B}$ if
${\cal A}\ast {\cal B}={\cal A} \otimes {\cal B}$ as a superspace and
\[
(a_1\otimes b_1)\ast (a_2\otimes b_2)=a_1 b_1(a_2)\otimes b_1 b_2,
\]
see \cite{pass}.
The element $b_1(a_2)$ may include a sign factor imposed by the Sign Rule, see \cite{Del}, p. 45.

The (super)algebra $ H_{1, \,\eta}(G)$
is a deform of the crossed product
 of the Weyl algebra $W_N$ and the group algebra
of a finite subgroup $G \subset Sp(2N)$
generated by symplectic reflections.

\subsection{
The number of independent traces and supertraces on the symplectic
reflection algebras }


\begin{theorem} (\cite{KT2})\label{main1}
{\it Let the symplectic reflection group $ {G} \subset End(V)$ have
$T_G$ conjugacy classes without eigenvalue $1$ and $S_G$ conjugacy
classes without eigenvalue $-1$.

Then the 
algebra
 $ H_{1, \,\eta}(G)$ has
$T(G)=T_G$ independent traces whereas $ H_{1, \,\eta}(G)$ considered
as a superalgebra, see (\ref{2.6}), has $S(G)=S_G$ independent
supertraces. }
\end{theorem}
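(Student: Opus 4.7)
(Super)traces on an associative (super)algebra $\mathcal A$ are exactly linear functionals on the quotient $\mathcal A/[\mathcal A,\mathcal A]$ (resp.\ $\mathcal A/[\mathcal A,\mathcal A]_s$, where $[\cdot,\cdot]_s$ denotes the supercommutator), so the plan is to compute the dimensions of these two quotients for $\mathcal A=H_{1,\eta}(G)$. I would first use the PBW property of $H_{1,\eta}(G)$ to write every element uniquely as $\sum_{g\in G}P_g(x)\,g$ with $P_g\in\mathbb C[V]$, so that a trace is encoded by a family $\tau_g\colon\mathbb C[V]\to\mathbb C$ via $\mathrm{tr}(Pg)=\tau_g(P)$. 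Invariance under $G$-conjugation identifies $\tau_g$ with $\tau_{hgh^{-1}}$ up to the natural action of $h$ on polynomials, so only the conjugacy class $[g]$ is relevant and the problem splits class by class.

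The next step is to exploit the defining relation $[x,y]=\omega(x,y)+\sum_R\eta_R\omega_R(x,y)R$ together with $gx=g(x)g$ to get, modulo $[\mathcal A,\mathcal A]$, the reduction $y(1-g)(x)\,g\equiv -\omega(x,y)g-\sum_R\eta_R\omega_R(x,y)Rg$ for $x,y\in V$. When $g-1$ is invertible on $V$, the operator $1-g$ is surjective, so every monomial in $\mathbb C[V]\cdot g$ can be rewritten modulo commutators as a scalar multiple of $g$ plus contributions from other conjugacy classes; an induction ordered by $\mathrm{rank}(g-1)$ then bounds the contribution of class $[g]$ to the commutator quotient by one. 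When $g-1$ has a nontrivial kernel $Z_g$, the form $\omega$ restricts nondegenerately to $Z_g$ (a standard fact for $g\in Sp(V)$), so one finds $z,y\in Z_g$ with $\omega(z,y)\ne 0$; then $[z,yg]\equiv\omega(z,y)g$ shows $g$ is itself a commutator, and the whole class contributes zero. For supertraces the sign $(-1)^{\pi(x)\pi(y)}=-1$ on the Weyl part flips the role of $1-g$ to $1+g$, replacing ``no eigenvalue $1$'' by ``no eigenvalue $-1$''. This yields the upper bounds $\dim\le T_G$ and $\dim\le S_G$.

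For the matching lower bounds one must exhibit genuinely nonzero (super)traces. For each admissible conjugacy class $[g]$ I would realize $H_{1,\eta}(G)$ on a polynomial Fock module and define $\tau_{[g]}(a)$ as a Gaussian-regularized (super)trace of the operator $a\cdot g^{-1}$ on this module; invertibility of $g\mp 1$ is exactly the condition that makes the regularized Gaussian converge to a nonzero value, and linear independence across classes follows by evaluating on the basis elements $g'\in G\subset H_{1,\eta}(G)$. The main obstacle is controlling the $\eta$-dependent cross-terms in the class-by-class reduction above: the $Rg$ contributions mix classes of different $\mathrm{rank}(g-1)$ (resp.\ $\mathrm{rank}(g+1)$), so the inductive bookkeeping must be carefully ordered by this rank to ensure the deformation propagates correctly and produces the asserted dimensions rather than larger ones.
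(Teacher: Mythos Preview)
This theorem is not proved in the present paper: it is stated with attribution to \cite{KT2} and then used as a black box for the combinatorial computation of $T(\Gamma\wr S_N)$ and $S(\Gamma\wr S_N)$ in Sections~3--5. There is consequently no proof in the paper against which to compare your proposal.

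For what it is worth, your plan is a reasonable outline of how such a result is established, and it is in the spirit of the argument in \cite{KT2}: identify (super)traces with linear functionals on $H/[H,H]_{(\pm)}$, use PBW and $G$-conjugation to localize to conjugacy classes, and exploit the identity
\[
[x,\,yg]\;=\;y\,(1-g)(x)\,g\;+\;\omega(x,y)\,g\;+\;\sum_{R\in\mathcal R}\eta_R\,\omega_R(x,y)\,Rg
\]
to reduce polynomial degree when $1-g$ (resp.\ $1+g$) is invertible. One small slip: in the case $\ker(g-1)\ne 0$ you write that ``$[z,yg]\equiv\omega(z,y)g$ shows $g$ is itself a commutator'', but the cross-terms $\sum_R\eta_R\omega_R(z,y)Rg$ do not vanish automatically for $z,y\in Z_g$; what you actually obtain is that $g$ is congruent modulo commutators to a combination of elements supported on \emph{other} classes. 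You correctly identify exactly this cross-class mixing as the main obstacle in your final paragraph, so the plan is internally consistent, but be aware that organizing this induction so that it closes (the ordering by $\mathrm{rank}(g\mp 1)$ is not obviously monotone under $g\mapsto Rg$) is where the real work of \cite{KT2} lies, and your sketch does not yet resolve it.
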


\begin{proposition} Let $G$, $G_1$ and $G_2$ be symplectic reflection groups and $G=G_1 G_2$.
Then
$T(G_1 G_2)=T(G_1)T(G_2)$ and $S(G_1 G_2)=S(G_1)S(G_2)$.
\end{proposition}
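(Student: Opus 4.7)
The plan is to reduce the statement immediately to Theorem \ref{main1} and then check it at the purely group-theoretic level, i.e., in terms of conjugacy classes with constraints on their spectra.

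First I would fix the meaning of $G = G_1 G_2$: this is the internal direct product in which $G_1$ and $G_2$ are symplectic reflection groups acting on symplectic spaces $V_1$ and $V_2$, and $G$ acts on $V = V_1 \oplus V_2$ equipped with the direct-sum symplectic form $\omega = \omega_1 \oplus \omega_2$. The subgroups $G_1$ and $G_2$ commute and act trivially on the other summand, so as a group $G \cong G_1 \times G_2$, and every element of $G$ is represented by a block-diagonal matrix $\mathrm{diag}(g_1, g_2)$ with $g_j \in G_j$ acting on $V_j$.

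Next I would invoke the two standard facts: (i) conjugacy classes of $G_1 \times G_2$ are in bijection with pairs $(C_1, C_2)$ of conjugacy classes $C_j$ of $G_j$, via $(g_1,g_2) \mapsto (\mathrm{Cl}(g_1), \mathrm{Cl}(g_2))$; and (ii) the spectrum of the block-diagonal operator $\mathrm{diag}(g_1,g_2)$ on $V_1 \oplus V_2$ is the disjoint union of the spectra of $g_1$ on $V_1$ and $g_2$ on $V_2$. Consequently, $\mathrm{diag}(g_1,g_2)$ has no eigenvalue $1$ iff neither $g_1$ nor $g_2$ has eigenvalue $1$, and similarly for $-1$. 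Thus
\begin{equation*}
T_{G_1 G_2} = T_{G_1}\cdot T_{G_2},\qquad S_{G_1 G_2} = S_{G_1}\cdot S_{G_2},
\end{equation*}
where $T_G$, $S_G$ are defined as in Theorem \ref{main1}.

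Finally, applying Theorem \ref{main1} to each of $G$, $G_1$, $G_2$ converts these equalities on numbers of conjugacy classes into the claimed equalities $T(G_1 G_2) = T(G_1) T(G_2)$ and $S(G_1 G_2) = S(G_1) S(G_2)$. There is essentially no obstacle here; the only point requiring any care is making explicit that in the ambient symplectic representation $V = V_1 \oplus V_2$ the elements of $G$ act block-diagonally, so that spectra genuinely split as a disjoint union — everything else is bookkeeping.
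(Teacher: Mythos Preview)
Your argument is correct. In the paper this proposition is stated without proof, evidently as an immediate corollary of Theorem \ref{main1}; your write-up supplies precisely the routine verification one would expect --- interpreting $G_1G_2$ as the direct product acting block-diagonally on $V_1\oplus V_2$, so that conjugacy classes factor as pairs and spectra split as unions, whence the counts $T_G$ and $S_G$ multiply.
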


\begin{proposition}\label{klein}
  If there exists a $K\in G$ such that $K |_V=-1$,
then $K$ is  a Klein operator.
\end{proposition}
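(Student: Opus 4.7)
The plan is to verify the three defining conditions (\ref{k1})--(\ref{k3}) for $K$, using the fact that $H_{1,\eta}(G)$ is generated by $V$ together with the group elements, so the commutation relation only needs to be checked on these generators and then extended by induction on word length.

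First, condition (\ref{k1}) is immediate: since $K \in G$, the definition of parity (\ref{2.6}) gives $\pi(K) = 0$. For condition (\ref{k2}), note that $G$ is realized as a subgroup of $Sp(2N,\mathbb{C}) \subset End(V)$ and hence acts faithfully on $V$; since $K|_V = -1$ implies $K^2|_V = 1$, faithfulness forces $K^2 = 1$ as an element of $G$, and therefore also inside $H_{1,\eta}(G)$.

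For condition (\ref{k3}), I would treat the two types of generators separately. For $x \in V$: the crossed-product commutation rule $gx = g(x)\, g$ of Definition \ref{defpage}, applied to $g = K$, yields $Kx = K(x)\,K = -x\,K = (-1)^{\pi(x)} x K$, as required. For $g \in G$: since $K|_V = -1$ acts on $V$ as a scalar, $K$ commutes with every element of $End(V)$, in particular with the image of $g$; by faithfulness of $G \hookrightarrow End(V)$, this means $Kg = gK = (-1)^{\pi(g)} gK$ in $G$ and hence in the algebra. Once (\ref{k3}) holds on these generators, a short induction on the length of a monomial in the generators, combined with the multiplicativity of $\pi$, gives $Kf = (-1)^{\pi(f)} f K$ for every homogeneous $f \in H_{1,\eta}(G)$, and then linearity extends this to all $f$.

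There is essentially no serious obstacle here: the one point to be a little careful about is that the relation (\ref{k3}) is a statement inside the algebra $H_{1,\eta}(G)$, not only inside $\mathbb{C}[G]$, so one must use the defining relations of the crossed-product/deformation structure (namely $gx = g(x)g$) to push $K$ past vector generators. Once this is observed, the verification reduces to the two one-line checks above.
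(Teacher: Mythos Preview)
Your proof is correct. The paper states this proposition without proof, treating it as an immediate consequence of the definitions; your verification of (\ref{k1})--(\ref{k3}) on the generators $V\cup G$ followed by induction on word length is exactly the standard argument one would supply.
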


\section{The group $\Gamma \wr S_N$}
\label{secG}

\subsection{Finite subgroups of $Sp(2,\mathbb C)$}

The complete list of the finite subgroups $\Gamma \subset Sp(2,\mathbb C)$
is as follows, see, e.g., \cite{raf}:

{
\[
{{%
\begin{tabular}{|c|c|c|c|}
\hline {\bf $\Gamma$} & {\bf Order} & {\bf Presence of $-1$} &
\begin{tabular}{c}
{\bf The number of} \\
{\bf conjugacy classes}
{\bf $C(\Gamma)$}%
\end{tabular}
\\
\hline Cyclic group $\mathbf Z_n :=\mathbb{Z}/n\mathbb{Z}$ \ \ \ & $n$ & if \ $n$ is even & $n$ \\
\hline Binary dihedral group $\mathcal{D}_{n}\ \ $ & $4n$ & yes & $n+3$ \\
\hline Binary tetrahedral group $\mathcal{T}$ & $24$ & yes & $7$ \\
\hline Binary octahedral group $\mathcal{O}$ & $48$ & yes & $8$ \\
\hline Binary icosahedral group $\mathcal{I}$ & $120$ & yes & $9$ \\
\hline
\end{tabular}%
}}
\]%
}

It is easy to see that each of these groups, except
$\mathbf Z_{2k+1}$ with integer $k$, has $C(\Gamma) -1$
conjugacy classes without $+1$ in the spectrum and
has $C(\Gamma) -1$
conjugacy classes without $-1$ in the spectrum.
The group
$\mathbf Z_{2k+1}$ with integer $k$ has $C(\mathbf Z_{2k+1}) -1$
conjugacy classes without $+1$ in the spectrum and it
has $C(\mathbf Z_{2k+1})$
conjugacy classes without $-1$ in the spectrum.


\subsection{Symplectic reflections in $\Gamma\wr S_N$ (following \cite{EM})}

Let $V=\mathbb{C}^{2N}$ and let the symplectic form $\omega $ have the shape
\begin{equation}
\omega :=\left(
\begin{array}{cccc}
\varpi  &  &  &  \\
& \varpi  &  &  \\
&  & \ddots  &  \\
&  &  & \varpi
\end{array}%
\right),\ \text{ where }\varpi =\left(
\begin{array}{cc}
0 & 1 \\
-1 & 0%
\end{array}%
\right).
\end{equation}

The elements of the group $\Gamma \wr S_N$ have the form of $N
\times N$ block matrix with $2 \times 2$ blocks.
Consider the following elements of $\Gamma \wr S_N$
\bee
&& (D_{g,i})_{kl}:=\left\{
\begin{array}{l}
g, \ \ \text{ if } k=l=i, \\
1, \ \ \text{ if } k=l\ne i, \\
0, \ \ \text{ otherwise, }
\end{array}%
\right.
\label{D}
\\
&&(K_{ij})_{kl}:=\left\{
\begin{array}{ll}
\delta_{kl}, &\text{if } k,l\ne i, \ k,l\ne j, \\
\delta_{ki}\delta_{lj} + \delta_{kj}\delta_{li},&\text{otherwise, }\\
\end{array}%
\right.
\label{K}
\\
&& S_{g,ij}:=D_{g,i}D_{g^{-1},j}K_{ij},
\eee
where $i,j=1,...,N$, $i\ne j$, $1\ne g\in \Gamma$.
It is clear that $K_{ij}=K_{ji}$ and $ S_{g,ij}$ = $ S_{g^{-1},ji}$.

The complete set of symplectic reflections in $\Gamma\wr S_N$ consists of
$D_{g,i}$, $K_{ij}$
and $S_{g,ij}$, where
$1 \leqslant i < j \leqslant N$ and $1\ne g\in\Gamma$.
This set generates the group $\Gamma\wr S_N$.

The symplectic reflections $K_{ij}$ and $S_{g,ij}$ lie in one conjugacy class for all $i\ne j$ and $g\ne 1$;
the elements $D_{g,i}$ ($g\ne 1$) and $D_{h,j}$ ($h\ne 1$) lie in one conjugacy class if
$g$ and $h$ are conjugate in $\Gamma$.
So, the algebra $H_{1,\nu}(\Gamma^N \wr S_N)$ depends on $C(\Gamma)$ parameters $\nu$,
where $C(\Gamma)$ is the number of conjugacy classes in $\Gamma$ including the class $\{1\}$.

\subsection{Conjugacy classes}

Further,
the elements of the group  $\Gamma \wr S_N$ can be represented in the form
$D\sigma$ where $D\in \Gamma^N$ is a diagonal
$N \times N$ block matrix, each block being a $2 \times 2$-matrix,
and $\sigma$  is $N \times N$ block matrix of permutation
 each block being a $2 \times 2$-matrix.

The product has the form:
\[
(D_{1}\sigma _{1})(D_{2}\sigma _{2})=D_{3}\sigma _{3}
\]%
where $\sigma _{3}=\sigma _{1}\sigma _{2}$ and  $D_{3}=D_{1}\sigma
_{1}D_{2}\sigma _{1}^{-1}.$


Fix an element $g_{0}=D_{0}\sigma _{0}$. Since the permutation $%
\sigma _{0}$ is a product of cycles, there exists a~ permutation $\sigma
^{\prime }$ such that
\bee\label{dec}
\sigma ^{\prime }\sigma _{0}(\sigma ^{\prime })^{-1}=\left(
\begin{array}{cccc}
c_{1} &  &  &  \\
& c_{2} &  &  \\
&  & ... &  \\
&  &  & c_{s}%
\end{array}%
\right),
\text{ where $c_{k}$ are the cycles of length $L_{k}$, $\sum_{k}L_{k}=N$},
\\  \label{dec2}
c_{k}=\left(
\begin{array}{cccccc}
0 & 1 & 0 & 0 & ... & 0 \\
0 & 0 & 1 & 0 & ... & 0 \\
0 & 0 & 0 & 1 & ... & 0 \\
... & ... & ... & ... & ... & ... \\
0 & 0 & 0 & 0 & ... & 1 \\
1 & 0 & 0 & 0 & ... & 0%
\end{array}%
\right) .
\eee

The element $\sigma ^{\prime }D_{0}\sigma _{0}(\sigma ^{\prime })^{-1}$ has
the form%
\[
\sigma ^{\prime }D_{0}\sigma _{0}(\sigma ^{\prime })^{-1}=\left(
\begin{array}{cccc}
D_{1}c_{1} &  &  &  \\
& D_{2}c_{2} &  &  \\
&  & ... &  \\
&  &  & D_{s}c_{s}%
\end{array}%
\right),
\]%
where $D_{k}$ is an $L_{k}\times L_{k}$ diagonal block matrix, each block being a $%
2\times 2$-matrix:%
\[
D_{k}=\left(
\begin{array}{cccc}
g_{1}^{k} &  &  &  \\
& g_{2}^{k} &  &  \\
&  & ... &  \\
&  &  & g_{L_{k}}^{k}%
\end{array}%
\right) ,\text{ \ }g_{i}^{k}\in \Gamma .
\]%
Next, consider diagonal block matrices $%
H_{k}=diag(h_{1}^{k},h_{2}^{k},...,h_{L_{k}}^{k})$ and the elements
\[
H_{k}D_{k}c_{k}H_{k}^{-1}=\left(
\begin{array}{ccccc}
h_{1}^{k}g_{1}^{k}(h_{2}^{k})^{-1} &  &  &  &  \\
& h_{2}^{k}g_{2}^{k}(h_{3}^{k})^{-1} &  &  &  \\
&  & h_{3}^{k}g_{3}^{k}(h_{4}^{k})^{-1} &  &  \\
&  &  & ... &  \\
&  &  &  & h_{L_{k}}^{k}g_{L_{k}}^{k}(h_{1}^{k})^{-1}%
\end{array}%
\right) c_{k}.
\]

For any element $h_{1}^{k}\in \Gamma $, one can choose
\[
h_{2}^{k}=h_{1}^{k}g_{1}^{k}, \ h_{3}^{k}=h_{2}^{k}g_{2}^{k},\,
...,\  h_{L_{k}}^{k}=h_{L_{k}-1}^{k}g_{L_{k}-1}^{k}
\]
such that
\[
H_{k}D_{k}c_{k}H_{k}^{-1}=\left(
\begin{array}{ccccc}
1 &  &  &  &  \\
& 1 &  &  &  \\
&  & 1 &  &  \\
&  &  & ... &  \\
&  &  &  & h_{1}^{k}g_{1}^{k}g_{2}^{k}...g_{L_{k}}^{k}(h_{1}^{k})^{-1}%
\end{array}%
\right) c_{k}.
\]

So, each conjugacy class of $\Gamma \wr S_N$ is described by the set of cycles in the decomposition
(\ref{dec}), (\ref{dec2})
of $\sigma_0$, where each cycle is marked by some conjugacy class
of $\Gamma$.

The cycle of length $r$ marked by the conjugacy class $\alpha$ of $\Gamma$
with representative $g_\alpha\in \Gamma$ has the shape:
\be\label{marked-cyclec}
A_{\alpha,\,r}:=\left(
\begin{array}{ccccc}
1 &  &  &  &  \\
& 1 &  &  &  \\
&  & 1 &  &  \\
&  &  & ... &  \\
&  &  &  & g_\alpha%
\end{array}%
\right) c^{r}=\left(
\begin{array}{cccccc}
0 & 1 & 0 & 0 & ... & 0 \\
0 & 0 & 1 & 0 & ... & 0 \\
0 & 0 & 0 & 1 & ... & 0 \\
... & ... & ... & ... & ... & ... \\
0 & 0 & 0 & 0 & ... & 1 \\
g_\alpha & 0 & 0 & 0 & ... & 0%
\end{array}%
\right),
\ee
where $\alpha=1,...,C(\Gamma)$ and $r=1,2,...$;
$A_{\alpha,\,r}$ is an $r\times r$ block matrix, each block being a $2\times 2$ matrix.

So, each element $g\in\Gamma\wr S_N$ is conjugate to the element of the shape
\be\label{marked-cyc}
\left(
\begin{array}{ccccc}
A_{\alpha_1,\,r_1} &  &  &  &  \\
& A_{\alpha_2,\,r_2} &  &  &  \\
&  & ... &  &  \\
&  &  &  & A_{\alpha_s,\,r_s}%
\end{array}%
\right)
\ee

It is convenient to describe the conjugacy class
of $\Gamma \wr S_N$ with representative  (\ref{marked-cyc})
by the set of  nonnegative integers $p_r^\alpha$,
where $r=1,\, 2,\,3,...\,$, and $\alpha=1,\,...\,,\,C(\Gamma)$, such that
\be\label{such}
\sum_{\alpha,\,r}r p_r^\alpha=N.
\ee
The value $p_r^\alpha$ for some conjugacy class is the number of cycles of length $r$
in the decomposition
(\ref{marked-cyc}) marked by the conjugacy class $\alpha$ of the group $\Gamma$.

The restriction (\ref{such}) can be omitted and can serve as definition of $N$ for each set of the numbers $p_r^\alpha$.

The number of conjugacy classes in $\Gamma \wr S_N$ is equal to
\be
C(\Gamma \wr S_N) = \sum_{p_r^\alpha:\, \sum_{\alpha,\,r}r p_r^\alpha=N} 1.
\ee
The generating function $c(\Gamma,x)$ of the number of conjugacy classes is defined as
\be
c(\Gamma,x):= \sum_{N=0}^\infty C(\Gamma \wr S_N)x^N
\ee
and is equal to
\be
c(\Gamma,x)=
\sum_{p_r^\alpha} x^{\sum_{\alpha,\,r}r p_r^\alpha}
= \sum_{p_r^\alpha =0}^\infty
\prod_{r=1}^\infty
\prod_{\alpha=1}^{C(\Gamma)}
(x^r)^{p_r^\alpha}=
\prod_{r=1}^\infty
\prod_{\alpha=1}^{C(\Gamma)}
\frac 1 {1-x^r}=\left(\Psi (x)\right)^{C(\Gamma)},
\ee
where $\Psi (x)$ is the Euler function
\be
\Psi (x):=\prod_{r=1}^\infty \frac 1 {1-x^r}
\ee

\subsection{Characteristic polynomials of conjugacy classes}

Before seeking the generating functions $t(\Gamma,x)$ and $s(\Gamma,x)$,
let us find the characteristic polynomial of the conjugacy class $g$  of $\Gamma\wr S_N$
identified by the set $p_r^\alpha$.

Let $P^{}_M(\lambda):=\det (M-\lambda)$ be the characteristic polynomial of the matrix $M$.
Then it is easy to see that
\[
P^{} _{A_{\alpha,\,r}}(\lambda )=\det (A_{\alpha,\,r}-\lambda )=\det
(g_\alpha-\lambda ^{r})=P^{} _{g_\alpha}(\lambda ^{r}),
\]
where the marked cycle $A_{\alpha,\,r}$ is defined by (\ref{marked-cyclec}).

Let $g\in\Gamma\wr S_N$ be defined by Eq. (\ref{marked-cyc}).

Now, it is easy to show that
\be\label{det}
P^{}_g(\lambda)=\det(g-\lambda)=
\prod_{i=1}^s \det( A_{\alpha_i,\, r_i}-\lambda ) =
\prod_{r,\alpha} (\det( g_\alpha-\lambda ^{r}) )^{p_r^\alpha},
\ee
if $g$ is a representative of the congugacy class in $\Gamma\wr S_N$ corresponding to the set
$p_r^\alpha$.

\begin{definition}
We call a conjugacy class {\it $t$-admissible}, if its representative $g\in \Gamma\wr S_N$
is such that $P^{}_g (1) \ne 0$.
\end{definition}
\begin{definition}
We call a conjugacy class {\it $s$-admissible}, if its representative $g\in \Gamma\wr S_N$
is such that $P^{}_g (-1) \ne 0$.
\end{definition}
\begin{definition}
We call a marked cycle $A_{\alpha,\, r}$, see Eq. (\ref{marked-cyclec}), {\it $t$-admissible},\\
if $P^{}_{A_{\alpha,\, r}} (1) \ne 0$.
\end{definition}
\begin{definition}
We call a marked cycle $A_{\alpha,\, r}$, see Eq. (\ref{marked-cyclec}), {\it $s$-admissible},\\
if $P^{}_{A_{\alpha,\, r}} (-1) \ne 0$.
\end{definition}


Recall that $g_\alpha\in\Gamma \subset Sp(2,\mathbb C)$, where $\Gamma$ is a finite group.
So $\det g_\alpha =1$ and the Jordan normal  form of $g_\alpha$ is diagonal.
This implies that if $g_\alpha$ has $+1$ in its spectrum,
then $g_\alpha=1$ and if $g_\alpha$ has $-1$ in its spectrum,
then $g_\alpha=-1$.
These facts together with Eq. ({\ref{det}}) imply, in their turn,
the following two propositions:
\begin{proposition}\label{pr1}\it
The
conjugacy class 
of $\Gamma\wr S_N$
identified by the set $p_r^\alpha$
is $t$-admissible
 if and only if $g_\alpha\ne 1$ for all $\alpha$, $r$
with $p_r^\alpha\ne 0$.
\end{proposition}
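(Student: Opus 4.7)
The plan is to substitute $\lambda = 1$ directly into the factorization formula (\ref{det}) and then analyze when each resulting factor can vanish, using the special structure of $\Gamma \subset Sp(2,\mathbb{C})$.

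First I would write
\begin{equation*}
P^{}_g(1) = \prod_{r,\alpha} \bigl(\det(g_\alpha - 1^r)\bigr)^{p_r^\alpha} = \prod_{r,\alpha} \bigl(\det(g_\alpha - 1)\bigr)^{p_r^\alpha},
\end{equation*}
since $1^r = 1$ for every $r$. A product of complex numbers is nonzero iff each factor with positive exponent is nonzero, so $P^{}_g(1) \ne 0$ is equivalent to $\det(g_\alpha - 1) \ne 0$ for every pair $(\alpha, r)$ with $p_r^\alpha \ne 0$. Note that in this reduction the exponent $r$ plays no role on the right-hand side, so the condition only depends on which conjugacy classes $\alpha$ of $\Gamma$ actually appear in the cycle decomposition.

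Next I would translate $\det(g_\alpha - 1) \ne 0$ into the statement $g_\alpha \ne 1$ using the remark already made in the excerpt just before the proposition: because $g_\alpha \in \Gamma \subset Sp(2,\mathbb{C})$ is of finite order, it is diagonalizable, and its two eigenvalues multiply to $\det g_\alpha = 1$. Hence if $1$ lies in the spectrum of $g_\alpha$, both eigenvalues equal $1$, and diagonalizability forces $g_\alpha = 1$. Conversely, if $g_\alpha = 1$, obviously $\det(g_\alpha - 1) = 0$. This yields the equivalence $\det(g_\alpha - 1) \ne 0 \Leftrightarrow g_\alpha \ne 1$, which combined with the first step gives exactly the statement of the proposition.

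There is essentially no obstacle here: the argument is a direct bookkeeping using (\ref{det}), and the only non-cosmetic input is the $Sp(2,\mathbb{C})$ observation, which has already been stated in the paragraph preceding the proposition. The proof is therefore a two-line computation once formula (\ref{det}) is in hand.
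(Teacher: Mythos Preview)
Your argument is correct and coincides with the paper's own reasoning: the paper simply notes (in the paragraph immediately preceding Proposition~\ref{pr1}) that $g_\alpha\in Sp(2,\mathbb C)$ of finite order having $+1$ in its spectrum forces $g_\alpha=1$, and that this together with the factorization~(\ref{det}) yields the proposition. Your write-up just unpacks these two observations explicitly.
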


\begin{proposition}\label{pr2}\it
The conjugacy class
of $\Gamma\wr S_N$
identified by the set $p_r^\alpha$
is $s$-admissible
 if and only if
for any pair $\alpha,\,r$ such that $p_r^\alpha\ne 0$,
at least one of the next three conditions holds:
\begin{eqnarray*}
&&\text{ a) $g_\alpha\ne -1$ and $g_\alpha\ne 1$,}\\
&&\text{ b) $r$ is even and $g_\alpha = - 1$,}\\
&&\text{ c) $r$ is odd and $g_\alpha = 1$.}
\end{eqnarray*}
\end{proposition}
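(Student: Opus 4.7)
The plan is to reduce $s$-admissibility of a conjugacy class to a factor-by-factor condition via the product formula (\ref{det}), and then do a short case analysis using the constraint that any $g_\alpha\in\Gamma\subset Sp(2,\mathbb C)$ with $\pm 1$ in its spectrum must actually equal $\pm 1$.

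First, I would unwind the definition: the class indexed by $\{p_r^\alpha\}$ is $s$-admissible iff $P_g(-1)\neq 0$, where $g$ is the block-diagonal representative (\ref{marked-cyc}). Substituting $\lambda=-1$ into (\ref{det}),
\[
P_g(-1)=\prod_{r,\alpha}\bigl(\det(g_\alpha-(-1)^r)\bigr)^{p_r^\alpha},
\]
so $P_g(-1)\neq 0$ iff, for every pair $(\alpha,r)$ with $p_r^\alpha\neq 0$, the scalar $\det(g_\alpha-(-1)^r)$ is nonzero, i.e.\ $g_\alpha$ does not have $(-1)^r$ as an eigenvalue.

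Next, I would invoke the structural remark already stated in the paper: since $g_\alpha\in\Gamma\subset Sp(2,\mathbb C)$ is of finite order, it is diagonalizable, and $\det g_\alpha=1$ forces its two eigenvalues to be reciprocals. Consequently, if $+1$ is an eigenvalue then both eigenvalues are $+1$ and $g_\alpha=1$; if $-1$ is an eigenvalue then both eigenvalues are $-1$ and $g_\alpha=-1$. Therefore the condition ``$g_\alpha$ does not have $(-1)^r$ as an eigenvalue'' simplifies to
\[
\text{$r$ even: } g_\alpha\neq 1,\qquad \text{$r$ odd: } g_\alpha\neq -1.
\]

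Finally, I would verify that this pair of conditions is precisely the disjunction (a)$\vee$(b)$\vee$(c) of the proposition. If $g_\alpha\notin\{1,-1\}$, then (a) holds and both requirements are automatic. If $g_\alpha=1$, the requirement forces $r$ odd, which is (c); while if $g_\alpha=-1$, it forces $r$ even, which is (b). Conversely, each of (a), (b), (c) implies the relevant nonvanishing: in (a) neither $\pm 1$ is an eigenvalue; in (b) the eigenvalue of $g_\alpha$ is $-1$ but $(-1)^r=1$; in (c) the eigenvalue is $+1$ but $(-1)^r=-1$. No step is subtle: the only ``obstacle'' is remembering to use the $Sp(2,\mathbb C)$ trick that collapses ``$\pm 1$ in spectrum'' to ``$g_\alpha=\pm 1$,'' without which the case analysis would not close so cleanly. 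The same template (with $\lambda=1$ and the obvious sign changes) recovers Proposition \ref{pr1}.
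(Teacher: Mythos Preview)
Your proof is correct and follows exactly the route sketched in the paper: the paper derives both Propositions \ref{pr1} and \ref{pr2} from the product formula (\ref{det}) together with the observation that for $g_\alpha\in\Gamma\subset Sp(2,\mathbb C)$, having $\pm 1$ in the spectrum forces $g_\alpha=\pm 1$. You have simply written out in full the case analysis that the paper leaves implicit.
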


Note that the three sets of pairs $(r,\,\alpha)$ defined by the cases a), b), c)
in Proposition \ref{pr2} have empty pair-wise intersections.

Let $t_r(\Gamma)$ for $r=1,2...$ be equal to the number of different $\alpha$ such that $A_{\alpha,\, r}$
is $t$-admissible.

Evidently,
\be\label{tr}
t_r(\Gamma)=C(\Gamma)-1.
\ee

Analogously,  let $s_r(\Gamma)$ for $r=1,2...$ be  equal to the number of different $\alpha$ such that
$A_{\alpha,\, r}$
is $s$-admissible.

Evidently, if $\Gamma\ni -1$, then
\be\label{sr}
s_r(\Gamma)=C(\Gamma)-1.
\ee
and if $\Gamma\not\ni -1$, then
\be\label{srm}
s_{r}(\Gamma )=\left\{
\begin{array}{ll}
C(\Gamma )-1,&\text{ \ if }r\text{ is even}, \\
C(\Gamma ),&\text{ \ if }r\text{ is odd}.%
\end{array}%
\right.
\ee

\section{Combinatorial problem}

Consider the following simple combinatorial problem
(analogous problems are considered in \cite{hall}).

Suppose we have an unlimited supply of 1-gram colored weights for each of $n_1$ different colors,
an unlimited supply of 2-gram colored weights for each of $n_2$ different colors,
an unlimited supply of 3-gram colored weights for each of $n_3$ different colors, and so on.
Let $a^N_{n_1,\,...,\,n_k,\,...}$
be the number of opportunities to choose weights from our set of total mass $N$ grams.

The problem is to find generating function
\be
F_{n_1,...,n_k,...}(x) := \sum_{N=0}^\infty a^N_{n_1,\,...,\,n_k,\,...} x^N.
\ee

This problem is exactly the problem we discussed earlier. Namely, now we say \lq\lq $r$-gram weight"
instead of cycle of length $r$, and \lq\lq the number of different colors $n_r$" instead of
the number $t_r$ (\ref{tr}) or  $s_r$ (\ref{sr})--(\ref{srm}) of different $\alpha$.

\proposition\label{cp}
{\it
\be
F_{n_1+m_1,\, n_2+m_2,\,...,\,n_k+m_k,\,...}(x)
=
F_{n_1,\, n_2,\,...,\,n_k,\,...}(x) \cdot
F_{m_1,\, m_2,\,...,\,m_k,\,...}(x).
\ee
}
\begin{proof}
To prove this proposition, it suffices to note that
\be
a^N_{n_1+m_1,\, n_2+m_2,\,...,\,n_k+m_k,\,...}
=
\sum_{M=0}^N a^M_{n_1,\, n_2,\,...,\,n_k,\,...} \cdot
a^{N-M}_{m_1,\, m_2,\,...,\,m_k,\,...}.
\ee
\end{proof}

Introduce the functions
\be
f_i := F_{n^i_1,\, n^i_2,\,...,\,n^i_k,\,...}, \qquad \text{where $n^i_k=\delta^i_k$.}
\ee
Then
\be
 f_i(x)=1+x^i+x^{2i}+x^{3i}+\,...\,= \frac 1 {1-x^i}.
\ee

The next theorem follows from Proposition \ref{cp}
\begin{theorem}\label{comb}
\be
F_{n_1,\, n_2,\,...,\,n_k,\,...}=\prod_{i=1}^\infty (f_i)^{n_i}.
\ee
\end{theorem}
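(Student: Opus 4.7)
The plan is to iterate Proposition \ref{cp} twice: first to separate the different weight sizes, and then to separate individual colors of the same weight. Starting from Proposition \ref{cp}, I would apply it with $(m_1, m_2, \ldots)$ supported only in a single coordinate $i$. By induction on the number of weight sizes with nonzero color count (handling only finitely many nonzero $n_j$ to begin with), I peel off one weight size at a time to obtain
\be
F_{n_1, n_2, \ldots}(x) = \prod_{i} F_{0,\ldots,0,n_i,0,\ldots}(x),
\ee
where only the $i$-th slot is nonzero in each factor on the right.

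Next, for a fixed weight $i$, I would apply Proposition \ref{cp} again $n_i - 1$ times, splitting the $n_i$ colors of weight $i$ into $n_i$ groups of one color each. Since the one-color-of-weight-$i$ case is governed by the obvious bijection between multisets of a single color with total mass a multiple of $i$ and nonnegative integers (counting how many copies are selected), we get
\be
F_{0,\ldots,0,n_i,0,\ldots}(x) = f_i(x)^{n_i} = \frac{1}{(1-x^i)^{n_i}}.
\ee
Combining the two steps gives the theorem in the case where only finitely many $n_i$ are nonzero, which is already what the applications to $t_r$ and $s_r$ require after fixing the target mass $N$.

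The main subtlety, which I would address at the end, is making sense of the infinite product when infinitely many $n_i$ are nonzero. This is a purely formal matter: since $f_i(x) = 1 + x^i + x^{2i} + \cdots$ has no terms of positive degree less than $i$, only the factors with $i \le N$ can contribute to the coefficient of $x^N$ in $\prod_i f_i^{n_i}$. On the combinatorial side, $a_{n_1,n_2,\ldots}^N$ is unchanged when one discards all colors of weight $i > N$, because such weights cannot occur in any selection of total mass $N$. Thus both sides of the claimed identity agree coefficient-by-coefficient with the corresponding truncated (finitely supported) problem, and the finite case established above finishes the proof. I do not expect any genuine obstacle here; the only care required is the formal-power-series interpretation just described.
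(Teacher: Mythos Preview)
Your proof is correct and follows exactly the approach indicated in the paper, which simply states that the theorem follows from Proposition \ref{cp}; you have merely made the iteration explicit and added the (routine) remark about the formal-power-series interpretation of the infinite product. There is nothing to add.
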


The function $F_{1,\,1,\,1,\,...}=\Psi(x)$ is the well-known Euler function,
the generating function of the number of partitions of $N$ into the sum of positive integers.

\section{Generating functions $t(\Gamma)$ and $s(\Gamma)$}

\begin{theorem}\label{th1}\it Set
\bee
&&\Psi(x)=\prod_{i=1}^\infty \frac 1 {1-x^i} \qquad \text{ (Euler function)},\\
&&\Phi(x):=\prod_{k=0}^\infty \frac 1 {1-x^{2k+1}}.
\label{phi}
\eee

Let $T(\Gamma\wr S_N)$ be the dimension of the space of traces on $H_{1,\eta}(\Gamma\wr S_N)$
and let
$S(\Gamma\wr S_N)$ be the dimension of the space of supertraces on $H_{1,\eta}(\Gamma\wr S_N)$
considered as a superalgebra.

Let

\[
t(\Gamma,x) :=\sum_{N=0}^\infty T(\Gamma\wr S_N)x^N\text{~~
and~~}s(\Gamma,x) :=\sum_{N=0}^\infty S(\Gamma\wr S_N)x^N.
\]
\noindent Then
\bee
&&\text{  $t(\Gamma,x)=(\Psi(x))^{C(\Gamma)-1}$},\\
&&\text{  $s(\Gamma,x)=(\Psi(x))^{C(\Gamma)-1}$,\ \ \ \ \ \ \ \
if $\Gamma \ne \mathbf Z_{2k+1}$},\\
&&\text{  $s(\Gamma,x)=(\Psi(x))^{C(\Gamma)-1}\Phi(x) $,\ \
if $\Gamma = \mathbf Z_{2k+1}$.}
\eee
\end{theorem}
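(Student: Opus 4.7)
The plan is to combine Theorem \ref{main1} with the combinatorial identity of Theorem \ref{comb}, using the admissibility criteria from Propositions \ref{pr1} and \ref{pr2} to count the right conjugacy classes.

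First I would invoke Theorem \ref{main1} to replace $T(\Gamma\wr S_N)$ (resp.\ $S(\Gamma\wr S_N)$) by the number of $t$-admissible (resp.\ $s$-admissible) conjugacy classes of $\Gamma\wr S_N$. By the parametrization (\ref{marked-cyc})--(\ref{such}), each conjugacy class corresponds to a family of nonnegative integers $p_r^\alpha$ with $\sum_{\alpha,r} r p_r^\alpha = N$; that is, it is a ``multipartition'' of $N$ in which a part of size $r$ is colored by one of the conjugacy classes $\alpha$ of $\Gamma$. Propositions \ref{pr1} and \ref{pr2} assert that admissibility of a conjugacy class is equivalent to admissibility of every marked cycle appearing in it, so the count of admissible classes is exactly the count of $N$-gram assemblies from a stock of weighted colored pieces, where the pieces of weight $r$ come in $t_r(\Gamma)$ colors in the trace case and $s_r(\Gamma)$ colors in the supertrace case.

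This is precisely the combinatorial problem solved by Theorem \ref{comb}, which gives
\[
t(\Gamma,x)=\prod_{r=1}^\infty\!\Bigl(\tfrac{1}{1-x^r}\Bigr)^{\!t_r(\Gamma)},\qquad
s(\Gamma,x)=\prod_{r=1}^\infty\!\Bigl(\tfrac{1}{1-x^r}\Bigr)^{\!s_r(\Gamma)}.
\]
For the trace generating function I would substitute $t_r(\Gamma)=C(\Gamma)-1$ from (\ref{tr}), obtaining $(\Psi(x))^{C(\Gamma)-1}$ immediately. For the supertrace generating function, I split into two cases according to whether $-1$ lies in $\Gamma$. If $-1\in\Gamma$, formula (\ref{sr}) says $s_r(\Gamma)=C(\Gamma)-1$ for all $r$, and the result matches $t(\Gamma,x)$. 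If $-1\notin\Gamma$, inspection of the table in Section \ref{secG} shows $\Gamma$ must be $\mathbf{Z}_{2k+1}$; then (\ref{srm}) gives $s_r(\Gamma)=C(\Gamma)-1$ for even $r$ and $s_r(\Gamma)=C(\Gamma)$ for odd $r$, so
\[
s(\Gamma,x)=\prod_{r=1}^\infty\!\Bigl(\tfrac{1}{1-x^r}\Bigr)^{\!C(\Gamma)-1}\cdot\prod_{r\text{ odd}}\tfrac{1}{1-x^r}=(\Psi(x))^{C(\Gamma)-1}\Phi(x),
\]
which is the third formula.

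There is no real obstacle, since all the analytic and group-theoretic content (the trace count of Theorem \ref{main1}, the cycle parametrization of conjugacy classes, the factorized admissibility criteria, and the generating-function lemma) has been established earlier. The only care needed is in the bookkeeping step that converts ``a conjugacy class is admissible iff every marked cycle in it is admissible'' into the colored-weights language, so that Theorem \ref{comb} applies with $n_r=t_r(\Gamma)$ or $n_r=s_r(\Gamma)$; after that, the three claimed identities follow by direct substitution.
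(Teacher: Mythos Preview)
Your proposal is correct and follows essentially the same route as the paper: invoke Theorem \ref{main1}, translate the admissibility criteria of Propositions \ref{pr1}--\ref{pr2} into the colored-weights language, apply Theorem \ref{comb} with $n_r=t_r(\Gamma)$ or $n_r=s_r(\Gamma)$, and substitute (\ref{tr})--(\ref{srm}). The paper's own proof is simply a terser version of exactly this argument.
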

\begin{proof}
To prove Theorem \ref{th1}, we apply Theorem \ref{comb}
to the numbers (\ref{tr}) -- (\ref{srm}) of admissible conjugacy classes.
It is clear that
\be
t(\Gamma)=F_{t_1(\Gamma),\,t_2(\Gamma),\,t_3(\Gamma),\,...}=\Psi^{C(\Gamma)-1},
\ee
\be
s(\Gamma)=F_{s_1(\Gamma),\,s_2(\Gamma),\,s_3(\Gamma),\,...}
=
\left\{
\begin{array}{ll}
\Psi^{C(\Gamma )-1},&\text{ \ if }\Gamma \ni -1, \\
\Psi^{C(\Gamma )-1}\Phi,&\text{ \ if }\Gamma \not\ni -1.
\end{array}
\right.
\ee
\end{proof}

Observe that $\Phi(x)=\sum_{i=0}^\infty O_N x_N$, where $O_N$ is the number of
partitions of $N$ into the sum of odd positive integers, and $O_N$ coinsides with
the number of independent supertraces on $H_{1\nu}(S_N)$, see \cite{KV}.

\subsection{Inequality theorem}

\begin{theorem}\label{in2}
{\it Let $G=\Gamma \wr S_N$.
For each positive integer  $N$, the following statements hold:
\bee
&&S(G)>0,\\
&&S(G)\geqslant T(G), \\
&& S(G) = T(G) \text{ if and only if } H_{1,\nu}(G) \text{ contains a Klein operator.}
\eee
}
\end{theorem}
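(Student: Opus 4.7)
The plan is to deduce all three assertions from the explicit generating-function formulas of Theorem~\ref{th1}, combined with Proposition~\ref{klein} and the general principle (recorded in Section~\ref{prel}) that any Klein operator in $H_{1,\nu}(G)$ induces an isomorphism between the spaces of traces and supertraces.

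The positivity observation underpinning everything is that $\Psi(x)$ and $\Phi(x)$ are partition generating functions, so each has non-negative integer coefficients with $[x^0]=1$ and $[x^N]\ge 1$ for every $N\ge 0$. Consequently, any product $\Psi(x)^m\Phi(x)^{\varepsilon}$ with $m\ge 0$, $\varepsilon\in\{0,1\}$, and $(m,\varepsilon)\ne(0,0)$ has strictly positive coefficients. By Theorem~\ref{th1}, $s(\Gamma,x)$ is always of this shape; the only risk of degeneracy, $C(\Gamma)-1=0$, occurs for $\Gamma=\mathbf Z_1$, and precisely in that case the factor $\Phi(x)$ is present. Hence every coefficient of $s(\Gamma,x)$ is strictly positive, yielding $S(G)>0$ for $N\ge 1$. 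For $S(G)\ge T(G)$, I would compare coefficients: either $s(\Gamma,x)=t(\Gamma,x)$, or $s(\Gamma,x)-t(\Gamma,x)=t(\Gamma,x)(\Phi(x)-1)$, with both factors on the right having non-negative coefficients.

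For the equivalence, the implication $(\Leftarrow)$ is immediate from the Klein-operator principle recalled above. For $(\Rightarrow)$, I would argue contrapositively: assume $\Gamma=\mathbf Z_{2k+1}$ and expand
\[
S(G)-T(G)=[x^N]\bigl(t(\Gamma,x)(\Phi(x)-1)\bigr)=\sum_{M=0}^{N-1}[x^M]\,t(\Gamma,x)\cdot O_{N-M},
\]
where $O_k$ is the number of partitions of $k$ into odd positive parts. Since $[x^0]t(\Gamma,x)=1$ and $O_N\ge 1$ for all $N\ge 0$, we obtain $S(G)-T(G)\ge O_N\ge 1$, contradicting $S(G)=T(G)$. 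Hence $\Gamma\ne\mathbf Z_{2k+1}$, so by the table of finite subgroups of $Sp(2,\mathbb C)$ in Section~\ref{secG} we have $-1\in\Gamma$; the element $K=(-1,\dots,-1;\mathrm{id})\in\Gamma\wr S_N$ then lies in $G$ and acts as $-\mathrm{Id}$ on $V$, so by Proposition~\ref{klein} it is a Klein operator in $H_{1,\nu}(G)$.

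The only mildly delicate step is the strict inequality $S(G)>T(G)$ in the odd-cyclic case, but it reduces to the elementary fact that $O_N\ge 1$; everything else is a direct comparison of generating functions or an appeal to results already proved in the paper.
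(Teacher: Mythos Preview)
Your proof is correct and follows essentially the same strategy as the paper's: split into the cases $\Gamma\ne\mathbf Z_{2k+1}$ (where $-1\in\Gamma$ and $K=\prod_i D_{-1,i}$ is a Klein operator, forcing $S=T$) and $\Gamma=\mathbf Z_{2k+1}$ (where Theorem~\ref{th1} gives $S>T$, precluding a Klein operator). You spell out more explicitly than the paper does the positivity of the coefficients and the convolution estimate $S-T\ge O_N\ge 1$ in the odd-cyclic case, but the underlying argument is the same.
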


Literally the same statements were proved for the Coxeter groups $G$ in \cite{stek},
and hence these statements hold for the product of any finite number of groups from
Family 1) and Family 2) defined on page \ref{page}.

\begin{proof}
Let $\Gamma \ne \mathbf Z_{2k+1}$.
Since each finite group $\Gamma\in Sp(2N,\mathbb C)$, except $\Gamma= \mathbf Z_{2k+1}$, contains $-1$,
the group $\Gamma\wr S_N$ contains Klein operator $K = \prod_{i=1}^N D_{-1,\,i}$.

There is no Klein operator in $H_{1,\nu}(\mathbf Z_{2k+1}\wr S_N)$ since for this algebra,
$S(\mathbf Z_{2k+1}\wr S_N)>T(\mathbf Z_{2k+1}\wr S_N)$, as it follows from
Theorem \ref{th1}.\end{proof}



\section*{Acknowledgments}
The authors (S.K. and I.T.) are grateful to Russian Fund for Basic Research
(grant No.~${\text{17-02-00317}}$)
for partial support of this work.


\end{document}